\theoremstyle{plain}
\newtheorem{lemma}{Lemma}
\newtheorem{corollary}{Corollary}
\newtheorem{proposition}{Proposition}
\newcommand{\ST}{\mathbb{S}}
\newcommand{\C}{\mathbb{C}}
\newcommand{\End}{{\rm End}\,}
\def\St{{\rm St}}
\def\SU{{\rm SU}}
\def\GL{{\rm GL}}
\def\Sym{{\rm Sym}}
\def\sym{{\rm Sym}}
\begin{document}

\title[A Cauchy-Schwarz inequality for  representations of $\SU(2)$]
{A Cauchy-Schwarz inequality for \\ representations of $\SU(2)$}

\author{Dipendra Prasad}

\address{School of Mathematics, Tata Institute of Fundamental
Research, Colaba, Mumbai-400005, INDIA}
\email{dprasad@math.tifr.res.in}

\begin{abstract} 

Motivated by some applications to calculating order of poles of certain (local or global) $L$-functions, the author considers a 
Cauchy-Schwarz type inequality for representations of $\SU(2)$.

\end{abstract}

\maketitle

\section*{\bf 1.}
For any 
finite dimensional representation $E$ of $\SU(2)$, 
let $E[i]$ denote the 
subspace of  $E$ on which $\ST^1$, the standard maximal torus in $\SU(2)$, operates by the character
$z\rightarrow z^i.$

Let $V$ and $W$ be two finite dimensional representations of $\SU_2$.
This note tries to  estimate  
$$\dim (V \otimes V[0] \oplus W \otimes W[0]) - 2\dim (V \otimes W[1]),$$
or, a closely related function:
$$d(V,W)= \dim (V \otimes V[2] \oplus W \otimes W[2]) - 2\dim (V \otimes W[1]).$$ Since the space of finite 
dimensional (virtual) representations of $\SU(2)$ is 
itself a free abelian group  with a natural basis consisting of the 
isomorphism classes of irreducible representations of $\SU(2)$, the question
about $d(V,W)$ amounts to one regarding:
$$d(V,W) = q(V) + q(W)-2B(V,W),$$
but notice that in the above expression for $d(V,W)$, the quadratic form $q$ and the bilinear form $B$ are different, and as a result there seems  no natural 
context in which $d(V,W)$ may be studied. Since $q$ is a positive definite quadratic form, and 
$B$ is closely related to its associated bilinear form, $d(V,W)$  
is typically non-negative, say for $\dim V - \dim W=1$, but not always; one of the motivating question has been if we can classify the pairs $(V,W)$ for which $d(V,W)$ is negative, or with $d(V,W)=0$?  We attempt some of this here, but without having any road-map, we seem rather lost. 

For any 
finite dimensional representation $E$ of $\SU(2)$, write $E = E^+ \oplus E^-$ as the sum of the subspaces on which the center
of $\SU(2)$ operates trivially, and non-trivially;  thus for representations $V, W$, write,
\begin{eqnarray*}
V = V^+ \oplus V^-,\\
W= W^+ \oplus W^-.
\end{eqnarray*}
Since,
\begin{eqnarray*}
V \otimes V [0] \oplus W \otimes W [0] & = & V^+ \otimes V^+[0] \oplus V^- \otimes V^-[0] \oplus W^+\otimes W^+ [0] \oplus  W^- \otimes W^-[0] \\
V \otimes W [1] & = & V^+ \otimes W^- [1] \oplus V^- \otimes W^+[1],
\end{eqnarray*}
it follows that,
\begin{eqnarray*}
V \otimes V [0] + W \otimes W [0] -2V \otimes W[1] & = & V^+ \otimes V^+ [0] + W^- \otimes W^- [0] -2V^+ \otimes W^-[1] \\
& + &  V^- \otimes V^- [0] + W^+ \otimes W^+ [0] -2V^- \otimes W^+[1].
\end{eqnarray*}

Therefore to understand the behavior of the function 
$$\dim (V \otimes V[0] \oplus W \otimes W[0]) - 2 \dim (V \otimes W[1]),$$ it suffices to assume
that the center of $\SU(2)$ operates by scalar on $V$ and on $W$, and these scalars are different.

\begin{lemma}
Let $V$ and $W$ be two finite dimensional representations of $\SU_2$ such that the center of $\SU(2)$ operates 
by scalars on $V$ and on $W$.  Then we have the estimates,
\begin{enumerate}
\item  If the central characters of $V$ and $W$ are the same, $$\dim (V \otimes V[0] \oplus W \otimes W[0])  \geq 2 \dim (V \otimes W[0]);$$
further, the inequality becomes an equality if and only if $V=W$. (This part of the lemma remains true for the 
zero weight space for a maximal torus in any compact Lie group.)
\item  If the central characters of $V$ and $W$ are opposite, 
$$\dim (V \otimes V[0] \oplus W \otimes W[0])  \geq 2 \dim (V \otimes W[1]);$$
further, the inequality is never an equality.
\end{enumerate}
\end{lemma}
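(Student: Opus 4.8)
The plan is to push everything down to the combinatorics of weight multiplicities. Write $v_i=\dim V[i]$ and $w_i=\dim W[i]$; since the nontrivial element of the Weyl group of $\SU(2)$ carries the character $z\mapsto z^i$ to $z\mapsto z^{-i}$, we have $v_{-i}=v_i$ and $w_{-i}=w_i$. Weights add under tensor product, so for every integer $k$,
\[
\dim (V\otimes W)[k]=\sum_i v_i\,w_{k-i};
\]
in particular $\dim(V\otimes V)[0]=\sum_i v_i v_{-i}=\sum_i v_i^2$, $\dim(V\otimes W)[0]=\sum_i v_i w_i$, and $\dim(V\otimes W)[1]=\sum_i v_i w_{1-i}$. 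With these formulas in hand I expect both inequalities to come out of completing a square.

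For part (1) I would simply observe that
\[
\dim(V\otimes V)[0]+\dim(W\otimes W)[0]-2\dim(V\otimes W)[0]=\sum_i\bigl(v_i-w_i\bigr)^2\ge 0,
\]
the cross term being rewritten using $w_{-i}=w_i$. This vanishes exactly when $v_i=w_i$ for all $i$, i.e. when $V$ and $W$ have the same character, which for $\SU(2)$ means $V\cong W$. For the parenthetical general statement the natural setting is Cauchy--Schwarz in the character ring of the maximal torus $T$: writing $\langle\,\cdot,\cdot\,\rangle_T$ for the pairing that counts common $T$-weights with multiplicity, one has $\dim(V\otimes W)[0]=\langle \mathrm{Res}_T V^\vee,\mathrm{Res}_T W\rangle_T$, whence $2\dim(V\otimes W)[0]\le \|\mathrm{Res}_T V^\vee\|_T^2+\|\mathrm{Res}_T W\|_T^2$ by Cauchy--Schwarz and the arithmetic--geometric mean inequality. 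Here I would be careful to flag that $\|\mathrm{Res}_T V^\vee\|_T^2=\sum_\lambda(\dim V[\lambda])^2$ equals $\dim(V\otimes V)[0]$ only by virtue of the self-duality $v_{-\lambda}=v_\lambda$ special to $\SU(2)$; for a general compact group the clean inequality is $2\dim(V\otimes W)[0]\le \dim(V\otimes V^\vee)[0]+\dim(W\otimes W^\vee)[0]$, which specializes to the stated one.

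For part (2) the central characters are opposite, so one of $V,W$ lives on even weights and the other on odd weights. The same device works after a shift by $1$: summing over all integers,
\[
\sum_i\bigl(v_i-w_{1-i}\bigr)^2=\dim(V\otimes V)[0]-2\dim(V\otimes W)[1]+\dim(W\otimes W)[0],
\]
where the only point to verify is the reindexing $\sum_i w_{1-i}^2=\sum_j w_j^2=\dim(W\otimes W)[0]$. Non-negativity of a sum of squares then gives the asserted inequality at once.

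The one genuinely representation-theoretic step, and the only place I expect any friction, is the claim that (2) is \emph{never} an equality. Vanishing of the above sum of squares forces $v_i=w_{1-i}$ for every $i$; feeding in the symmetry $v_i=v_{-i}$ turns this into $w_{1+i}=w_{1-i}$ for all $i$, so the sequence $(w_j)$ is $2$-periodic. As $V$ and $W$ are finite dimensional, $(w_j)$ has finite support and must therefore vanish identically, forcing $W=0$ and then $V=0$. Hence for any nonzero pair the difference is a nonzero sum of squares of integers, so it is at least $1$. I would stress that this final argument uses only the Weyl symmetry of $\SU(2)$ weight multiplicities together with finite-dimensionality; no unimodality is needed, but the symmetry is indispensable, which is exactly why the strict inequality is particular to this situation.
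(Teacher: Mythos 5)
Your proposal is correct, and for part (2) it takes a genuinely different and more elementary route than the paper. For part (1) you and the paper do the same thing: the paper integrates $(\chi_1-\chi_2)^2\geq 0$ over the torus, which by Parseval is exactly your sum $\sum_i(v_i-w_i)^2$. For part (2), however, the paper does not work on the torus at all: it multiplies by auxiliary irreducible characters, integrates $(\chi_1\St_n-\chi_2\St_{n-1})^2\geq 0$ over all of $\SU_2$ for $n$ large, and then interprets each term via Schur orthogonality, the Clebsch--Gordon decomposition of $\St_n^2$ and $\St_n\St_{n-1}$, and the fact that each irreducible of trivial (resp.\ non-trivial) central character contains the weight $0$ (resp.\ weight $1$) exactly once. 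Your shift-by-one completion of the square $\sum_i(v_i-w_{1-i})^2$ --- in effect $\int_{\ST^1}|\chi_1-z\chi_2|^2$ --- reaches the same quantity with far less machinery, and your treatment of strict inequality (the two reflection symmetries of $(w_j)$ about $0$ and about $1$ compose to $2$-periodicity, which finite support rules out for $W\neq 0$) replaces the paper's argument that equality would force $\chi_1\St_n=\chi_2\St_{n-1}$ and $\chi_1\St_{n-1}=\chi_2\St_n$, hence $\chi_1=\pm\chi_2$. What the paper's heavier approach buys is not visible in Lemma 1 itself: the expansion of $\chi_1\St_n-\chi_2\St_{n-1}$ into irreducible characters (equation $(\star)$) is precisely the tool refined in Sections 2 and 3 to get the sharper lower bound of Lemma 2 and the upper bound of Proposition 1, so the two sum-of-squares certificates, though numerically equal, are organized differently and the paper's is the one that generalizes. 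Two small remarks: your caveat that the general compact-group statement should be phrased with $V^\vee$ is well taken (as written, the parenthetical claim fails already for the standard representation and its dual for $\SU(3)$, where the zero weight spaces of $V\otimes V$ and $W\otimes W$ vanish but that of $V\otimes W$ does not); and in the $2$-periodicity step you should say explicitly that you are combining the derived symmetry $w_{1+i}=w_{1-i}$ with the Weyl symmetry $w_j=w_{-j}$, since neither reflection alone gives a translation.
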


\begin{proof}
Let $\chi_1$ (resp. $\chi_2$) denote the character of the representation $V$ (resp. $W$).
The first part of the lemma follows by integrating on $\ST^1$ the inequality, 
$$(\chi_1-\chi_2)^2 = \chi_1^2+\chi_2^2-2 \chi_1 \chi_2 \geq 0.$$ 

For the proof of the second part of the lemma, 
let $\St_i$ denote the character of the standard $(i+1)$-dimensional irreducible representation of $\SU_2$, i.e. $\Sym^i(\C^2)$, which is a  real valued 
function on $\SU_2$. By the Clebsch-Gordon theorem, $\St_i^2$ is the character of 
the sum of all irreducible representations of $\SU_2$ with multiplicity one  of highest weights $\leq 2i$, and of trivial central character, and 
similarly $\St_i \cdot \St_{i-1}$ is the character of 
the sum of all irreducible representations of $\SU_2$ with multiplicity one 
of highest weights $\leq 2i-1$, and of non-trivial central character.
 It follows from the
representation theory of $\SU_2$ that any irreducible representation of $\SU_2$ with trivial central 
character contains the trivial character of $\ST^1$ with multiplicity one, whereas any irreducible representation of $\SU_2$ with non-trivial central 
character contains the character $z\rightarrow z$ of $\ST^1$ with multiplicity one. Integrating the following 
inequality for $n$ sufficiently large (this time on $\SU_2$ on which we will fix a Haar measure giving it volume 1)
$$(\chi_1 \St_n-\chi_2 \St_{n-1})^2 
= \chi_1^2 \St_n^2+\chi_2^2 \St_{n-1}^2 -2 \chi_1 \chi_2 \St_n \St_{n-1}\geq 0,$$ 
and interpreting the various terms using the Schur orthogonality, and the facts on representation theory of $\SU_2$ 
just recalled, the inequality in the second part of the lemma follows. 

To see that the inequality in the second part of the Lemma can never be an equality, note that the equality can happen only if 
$\chi_1 \St_n-\chi_2 \St_{n-1}=0$. But by the same argument, we must then also have, $\chi_1 \St_{n-1}-\chi_2 \St_{n} = 0$. These two identities 
mean that $\chi_1 = \pm \chi_2$,  which is not an option because of different central characters. 
\end{proof}

\begin{corollary}
Let $V =\sum_{i=0}^{i=d} n_i \sym^{2i}(\C^2)$, and 
$W  = \sum_{i=1}^{i=d} m_i \sym^{2i-1}(\C^2)$ 
 be two finite dimensional representations of $\SU_2$.
For any 
representation $E$ of $\SU(2)$, let $E[i]$ denote the 
subspace of  $E$ on which $\ST^1$, the standard maximal torus in $\SU(2)$, operates by the character
$z\rightarrow z^i.$   Then we have the estimate,
\begin{eqnarray*} 2 \dim (V \otimes W[1]) - \dim (V \otimes V[2] \oplus W \otimes W[2]) 
& \leq & \sum n_i^2 + \sum m_i^2 .
\end{eqnarray*}
\end{corollary}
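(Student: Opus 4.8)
The plan is to reduce the two weight-$2$ spaces on the left-hand side to the weight-$0$ spaces that the Lemma already controls, paying for the reduction with the explicit correction terms $\sum n_i^2$ and $\sum m_i^2$, and then to quote part (2) of the Lemma. The whole argument is thus a reformulation that converts the ``wrong'' weight spaces into the ``right'' ones.

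First I would record a structural identity for the weight-$2$ space of a representation with trivial central character. If $E$ is such a representation, then $E=\bigoplus_{k} c_{2k}\,\sym^{2k}(\C^2)$, and since $\sym^{2k}(\C^2)$ contains the zero weight with multiplicity one always, but the weight $2$ with multiplicity one exactly when $k\geq 1$, one obtains
$$\dim E[2] = \dim E[0] - c_0,$$
where $c_0$ is the multiplicity of the trivial representation in $E$. I then apply this with $E=V\otimes V$ and $E=W\otimes W$, both of which have trivial central character (the first because $V$ does, the second because $W$ has central character of order two). As all representations of $\SU_2$ are self-dual, the multiplicity of the trivial summand in $V\otimes V$ is $\langle \chi_1,\chi_1\rangle=\sum n_i^2$ and in $W\otimes W$ is $\langle \chi_2,\chi_2\rangle=\sum m_i^2$, where $\chi_1,\chi_2$ are the characters of $V,W$. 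This yields
$$\dim(V\otimes V)[2]+\dim(W\otimes W)[2] = \dim(V\otimes V)[0]+\dim(W\otimes W)[0] - \sum n_i^2 - \sum m_i^2.$$

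Substituting this identity into the left-hand side of the desired estimate, the claim becomes
$$2\dim(V\otimes W)[1] - \dim(V\otimes V)[0] - \dim(W\otimes W)[0] \leq 0,$$
which is precisely the inequality furnished by part (2) of the Lemma, applicable here because $V$ and $W$ have opposite central characters. Assembling the three displays gives the stated bound. I do not expect a genuine obstacle: the only conceptual point is the identity $\dim E[2]=\dim E[0]-c_0$ together with the identification of $c_0$ with $\langle\chi,\chi\rangle=\sum(\text{multiplicity})^2$; once this bookkeeping of which irreducible constituent contributes to which weight is pinned down, the remainder is a direct substitution followed by an appeal to the Lemma.
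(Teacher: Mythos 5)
Your argument is correct and is essentially identical to the paper's: the paper likewise uses the identity $\dim \pi[0] - \dim \pi[2] = \dim \pi^{\SU_2}$ for $\pi$ of trivial central character, applied to $V\otimes V$ and $W\otimes W$ with $\dim(V\otimes V)^{\SU_2} = \dim \End_{\SU_2}(V) = \sum n_i^2$ and $\dim(W\otimes W)^{\SU_2} = \sum m_i^2$, and then invokes part (2) of Lemma 1. Your identification of these invariant-space dimensions via $\langle\chi,\chi\rangle$ and self-duality is just a rephrasing of the same bookkeeping.
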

\begin{proof} The proof is an immediate consequence of the previous lemma on noting the well-known fact that for
any representation $\pi$ of $\SU_2$ with trivial central character,
$$\dim \pi[0] - \dim \pi[2] = \dim \pi^{\SU_2},$$
and therefore for representations $V,W$ as above,
\begin{eqnarray*}
V \otimes V [0] - V \otimes V [2] = \dim \End _{\SU_2}[V] & = & \sum_{i=0}^{i=d} n^2_i \\
W \otimes W [0] - W \otimes W[2] = \dim \End_{\SU_2}[W] & = & \sum_{i=1}^{i=d} m^2_i.
\end{eqnarray*} 
\end{proof}

To get some feel for the dimensions involved, let's do an example. Let
\begin{eqnarray*}
V & = & n_0 \cdot 1  \oplus n_2 \sym^{2}(\C^2) \oplus n_{2d} \sym^{2d}(\C^2), \\
W & = & m_1 \sym^1(\C^2) \oplus m_{2r-1}\sym^{2r-1}(\C^2),
\end{eqnarray*}
assuming $2d > 2r-1$. Then,
\begin{eqnarray*}
(V\otimes V)[0] & = & n_0^2 + 3n_2^2 +(2d+1)n_{2d}^2 + 2n_0n_2+2n_0n_{2d} +6n_2n_{2d}, \\
(W \otimes W)[0]& = & 2m_1^2 + 2rm_{2r-1}^2 + 4m_1m_{2r-1},\\
(V \otimes W)[1] & = & n_0m_1+n_0m_{2r-1} + 2n_2m_1+3n_2m_{2r-1} +2n_{2d}m_1+2rn_{2d}m_{2r-1}.
\end{eqnarray*}

Further, 
\begin{eqnarray*}
\dim (\End V) & = & n_0^2 + n_2^2 +n_{2d}^2 \\
\dim (\End W) & = & m_1^2 + m_{2r-1}^2 .
\end{eqnarray*}

Therefore,
\begin{eqnarray*}
& & \dim (V \otimes V[0] \oplus W \otimes W[0]) - 2 \dim (V \otimes W[1])\\
& = & n_0^2 + 3n_2^2 +(2d+1)n_{2d}^2 + 2n_0n_2+2n_0n_{2d} +6n_2n_{2d} + 2m_1^2 + 2rm_{2r-1}^2 + 4m_1m_{2r-1}\\
&-& 2(n_0m_1+n_0m_{2r-1} + 2n_2m_1+3n_2m_{2r-1} +2n_{2d}m_1+2rn_{2d}m_{2r-1}),
\end{eqnarray*}
which by Lemma 1 is always positive. On the other hand,
\begin{eqnarray*}
& & \dim (V \otimes V[2] \oplus W \otimes W[2]) -  2 \dim (V \otimes W[1])\\
& = &  2n_2^2 +2d n_{2d}^2 + 2n_0n_2+2n_0n_{2d} +6n_2n_{2d} + 2m_1^2 + 2rm_{2r-1}^2 + 4m_1m_{2r-1}\\
&-& 2(n_0m_1+n_0m_{2r-1} + 2n_2m_1+3n_2m_{2r-1} +2n_{2d}m_1+2rn_{2d}m_{2r-1}),
\end{eqnarray*}
is `usually' positive but not always, and something which we are not able fully understand. We may want to impose an 
auxiliary condition such as $\dim V - \dim W =1$.

\section*{\bf 2.}

Rest of this paper is written with the aim of improving upon the inequality offered by Lemma 1
 in the case of different central characters, or equivalently to understand given the representation $V$ of $\SU(2)$, what
are the choices for the representation $W$ so that the difference of the left hand side of the inequality in Lemma 1 with the right hand side is as small as possible;
we do this by analyzing the argument 
which goes into the proof of the lemma more carefully.

Write,
\begin{eqnarray*}
V & = & \sum_{i=0}^{i=d} n_i \sym^{2i}(\C^2), \\
W & = & \sum_{i=1}^{i=d} m_i \sym^{2i-1}(\C^2).
\end{eqnarray*}
We use Clebsch-Gordon theorem to expand the character $\chi_1 \St_{n} - \chi_2 \St_{n+1}$ out:

\begin{eqnarray*}
\chi_1 \St_{n} - \chi_2 \St_{n+1} 
& = & \sum_{i=0}^{i=d} n_i \St_{2i} \St_{n} - \sum_{i=1}^{i=d}m_i\St_{2i-1} \St_{n+1} \\
& = & \sum_{i=0}^{i=d} n_i ( \St_{n+2i} + \cdots + \St_{n-2i}) - \sum_{i=1}^{i=d}m_i(\St_{n+2i}  + \cdots + 
\St_{n-2i+2}) \\
& = & n_0\St_{n}+ \sum_{i=1}^{i=d}(n_i-m_i)(\St_{n+2i} + \cdots + \St_{n-2i}) + \sum_{i=1}^{i=d} m_i  \St_{n-2i}   \\
& \stackrel{\star}{=} &  (n_d-m_d)\St_{n+2d} + (n_d-m_d +n_{d-1}-m_{d-1})\St_{n+2d-2} + \cdots \\ & & + 
(n_d-m_d +n_{d-1}-m_{d-1}+n_{d-2} + \cdots + n_1-m_1)\St_{n+2}  \\ & & + 
(n_d-m_d +n_{d-1}-m_{d-1}+n_{d-2} + \cdots + n_1-m_1 +n_0)\St_{n}  \\ & & + 
(n_d-m_d +n_{d-1}-m_{d-1}+n_{d-2} + \cdots + n_1-m_1 + m_1)\St_{n-2} \\ & &  + \cdots +  
(n_d-m_d + m_d)\St_{n-2d}.
\end{eqnarray*}

Using the inequality,
$$a^2 + b^2 \geq \frac{1}{2} (a-b)^2,$$
and grouping the first term in the above summation with the last, then the second with the second last etc.
(which leaves the middle term alone, which we do not consider), we get:
$$\int _{\SU_2}[\chi_1 \St_{n} - \chi_2 \St_{n-1}]^2 \geq \frac{1}{2}(m_1^2 + \cdots + m_d^2).$$

Similarly, using the same inequality,
$$a^2 + b^2 \geq \frac{1}{2} (a-b)^2,$$
but now grouping the first term in the equation $(\star)$  with the second last term, then the 
second with the third last etc., we get:
$$\int _{\SU_2}[\chi_1 \St_{n} - \chi_2 \St_{n-1}]^2 \geq \frac{1}{2}(n_0^2 +n_1^2 + \cdots + n_{d-1}^2+2n_d^2) 
.$$

We record this in the following lemma improving Lemma 1.

\begin{lemma}
Let $V =\sum_{i=0}^{i=d} n_i \sym^{2i}(\C^2)$, and 
$W  = \sum_{i=1}^{i=d} m_i \sym^{2i-1}(\C^2)$ 
 be two finite dimensional representations of $\SU_2$.
Assume that the character of $V$ is $\chi_1$ 
and that of $W$ is $\chi_2$. Then for any large enough integer $n$, we have the estimate,

$$\int _{\SU_2}[\chi_1 \St_{n} - \chi_2 \St_{n-1}]^2  \geq  \frac{1}{2}\max\{(m_1^2 + \cdots + m_d^2), (n_0^2 +n_1^2 + \cdots + n_{d-1}^2+ 2n_d^2)\},  
 $$
and hence, 
$$\dim (V \otimes V[0] \oplus W \otimes W[0])- 2 \dim (V \otimes W[1])  
\geq  \frac{1}{2} \max \{(m_1^2 + \cdots + m_d^2), (n_0^2 +n_1^2 + \cdots + 2n_d^2)\} .$$
\end{lemma}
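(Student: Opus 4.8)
The plan is to assemble the two lower bounds obtained in the run-up to the lemma and then to rewrite the integral as the asserted dimension count. First I would fix $n$ large enough that $n-2d\geq 0$. Then, in the Clebsch--Gordan expansion of $\chi_1\St_n-\chi_2\St_{n-1}$ (which has the same shape as the expansion $(\star)$, with the two ends interchanged), every character $\St_{n+2k}$, $-d\leq k\leq d$, is an honest irreducible character and these are pairwise distinct. Since the $\St_j$ are real valued and orthonormal for the chosen Haar measure of volume $1$, Schur orthogonality gives
\[
\int_{\SU_2}[\chi_1\St_n-\chi_2\St_{n-1}]^2=\sum_{k=-d}^{d}c_k^2,
\]
where $c_k$ denotes the coefficient of $\St_{n+2k}$. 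The key point is that this identity holds for a single $n$, so both of the displayed inequalities preceding the lemma are valid simultaneously; hence the integral is bounded below by the larger of their two right-hand sides, which is exactly the claimed $\max$, and the first assertion of the lemma follows.

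For completeness I would recall why the two pairings produce those two bounds, since this is where the one delicate point lies. Reading each $c_k$ off $(\star)$ as a partial sum and applying $a^2+b^2\geq\tfrac12(a-b)^2$: pairing the coefficient of $\St_{n+2k}$ with that of $\St_{n-2k}$ collapses each pair to $\tfrac12 m_{|k|}^2$, because these two partial sums differ by exactly $m_{|k|}$; summing over $k=1,\dots,d$ and discarding the nonnegative unpaired middle term gives $\tfrac12(m_1^2+\cdots+m_d^2)$. Pairing instead the coefficient of $\St_{n+2k}$ with that of $\St_{n-2k+2}$ makes each difference equal to $n_{k-1}$, yielding $\tfrac12 n_{k-1}^2$ for $k=1,\dots,d$; now one extreme coefficient is left unpaired, and since it equals $n_d$ it contributes an extra $n_d^2$, producing $\tfrac12(n_0^2+\cdots+n_{d-1}^2+2n_d^2)$. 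The bookkeeping of these partial sums --- checking that the two pairings telescope precisely to $m_{|k|}$ and to $n_{k-1}$, and correctly accounting for the single unpaired term in each case (the discarded middle term in the first, and the leftover extreme term equal to $n_d$ that doubles $n_d^2$ in the second) --- is the only step that requires genuine care; everything else is orthogonality.

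Finally, for the ``hence'' I would expand the square and identify the three resulting integrals exactly as in the proof of Lemma 1. Because $V$ and $W$ are sums of even and of odd symmetric powers respectively, $V\otimes V$ and $W\otimes W$ have trivial central character while $V\otimes W$ has nontrivial central character. For $n$ large, $\St_n^2$ and $\St_{n-1}^2$ are the sums, each summand occurring with multiplicity one, of the characters of all trivial-central-character irreducibles appearing, and every such irreducible has a one-dimensional zero weight space; hence $\int\chi_1^2\St_n^2=\dim(V\otimes V)[0]$ and $\int\chi_2^2\St_{n-1}^2=\dim(W\otimes W)[0]$. Likewise $\St_n\St_{n-1}$ is the sum of the characters of all nontrivial-central-character irreducibles appearing, each with a one-dimensional weight-one space, so $\int\chi_1\chi_2\St_n\St_{n-1}=\dim(V\otimes W)[1]$. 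Substituting these three identities turns the left-hand integral into $\dim(V\otimes V[0]\oplus W\otimes W[0])-2\dim(V\otimes W[1])$, and the second inequality of the lemma is then immediate from the first.
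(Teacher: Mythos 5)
Your proposal is correct and follows essentially the same route as the paper: expand $\chi_1\St_n-\chi_2\St_{n-1}$ by Clebsch--Gordan into a sum of distinct irreducible characters, compute the $L^2$-norm by Schur orthogonality, and apply $a^2+b^2\geq\tfrac12(a-b)^2$ to the two pairings (opposite ends, and ends shifted by one), with the telescoping differences $m_{|k|}$ and $n_{k-1}$ and the leftover extreme coefficient $n_d$ exactly as in the text. Your explicit justification of the ``hence'' step via the weight-space multiplicities is the same argument the paper gives in the proof of Lemma 1 and implicitly reuses here.
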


\section*{\bf 3.}

We analyze a little further the  
expression above  for $\chi_1\St_{n} - \chi_2 \St_{n+1} $ in equation $(\star)$ in the hope of finding an inequality in the other direction.
We have, 
$$\chi_1\St_{n} - \chi_2 \St_{n+1} = a_d\St_{n+2d} + \cdots + a_1 \St_{n+2} + (a_1+n_0) \St_{n} + (a_1+m_1)\St_{n-2} 
+ \cdots + (a_d+m_d)\St_{n-2d},$$
where 
$$a_i = \sum _{j \geq i} (n_j-m_j).$$

Therefore, 
\begin{eqnarray*} 
\int _{\SU_2}[\chi_1 \St_{n} - \chi_2 \St_{n+1}]^2  
& = & (a_1+n_0)^2+  \sum_{i\geq 1}{[a_i^2 + (a_i+m_i)^2]} \\ & = & 
(a_1+n_0)^2+ \sum_{i \geq 1} 2a_i(a_i+m_i) + 
\sum_{i \geq 1} m_i^2.
\end{eqnarray*}

It follows that the $L^2$-norm of $\chi_1 \St_{n} - \chi_2 \St_{n+1}$ is controlled by the signs of $a_i(a_i+m_i)$, and among all cases,
a favorable case would be when all the $a_i(a_i+m_i)$ have the same sign. This does happen in an interesting set of cases which is what we analyse
next. 

For a representation,
$$E =\sum_{i=0}^{i=d} \ell_i \sym^{i}(\C^2),$$
let
$E^-$ denote the representation,
$$E^- =\sum_{i=0}^{i=d} \ell_i \sym^{i-1}(\C^2),$$
with the understanding that $\sym^{-1}(\C^2) = 0$.

Write the representations $V =\sum_{i=0}^{i=d} n_i \sym^{2i}(\C^2)$, and 
$W  = \sum_{i=1}^{i=d} m_i \sym^{2i-1}(\C^2)$, in terms of partitions as:

$$V= \underbrace{2d+1,\cdots, 2d+1}_{n_d}, \underbrace{2d-1,\cdots, 2d-1}_{n_{d-1}},  \cdots, \underbrace{3,\cdots, 3}_{n_1}, 
\underbrace{1,\cdots, 1}_{n_0}, 
$$
and
$$W= \underbrace{2d,\cdots, 2d}_{m_d}, \underbrace{2d-2,\cdots, 2d-2}_{m_{d-1}},  \cdots, \underbrace{2,\cdots, 2}_{m_1}.$$
The corresponding partitions associated to $V^-$ and $W^-$ are:
$$V^-= \underbrace{2d,\cdots, 2d}_{n_d}, \underbrace{2d-2,\cdots, 2d-2}_{n_{d-1}},  \cdots, \underbrace{2,\cdots, 2}_{n_1},$$
and
$$W^-= \underbrace{2d-1,\cdots, 2d-1}_{m_d}, \underbrace{2d-3,\cdots, 2d-3}_{m_{d-1}},  \cdots, \underbrace{1,\cdots, 1}_{m_1}.$$

On the set of partitions $\underline{a} = a_1m +  a_2(m-1) + a_3(m-1) +\cdots + a_m1$, 
there is a partial ordering defined
by $\underline{a} \geq_Y \underline{b}$  
for $\underline{b} = b_1m +  b_2(m-1) + b_3(m-1) +\cdots + b_m1$, 
if 
 $a_1+\cdots +a_t \geq b_1+\cdots + b_t$ for all integers $t$. This partial order $\geq_Y$ on the set of all 
partitions is called the {\it Young order} giving rise to  {\it Young lattice}. It is different from the 
much more used partial order on the set of all partitions which is called the {\it Dominance} order. The Young order has the property that 
$\underline{a} \geq_Y \underline{b}$  
if and only if if we were to represent $\underline{a}$ and  $\underline{b}$  
as Young tableaux, then $\underline{b}$ sits inside $\underline{a}$ as a sub-tableaux, i.e., 
$\underline{b}$ is obtained from  $\underline{a}$ by removing some boxes from the end of each row.

An important property of Young order is that  if $\underline{a}$ is a partition of $n+1$ 
corresponding to an irreducible representation $\pi_{\underline{a}}$ of $S_{n+1}$, and 
 $\underline{b}$ is a partition of $n$ corresponding to an irreducible representation $\pi_{\underline{b}}$ of $S_{n}$, 
then  the representation $\pi_{\underline{a}}$ of $S_{n+1}$ contains the representation  $\pi_{\underline{b}}$ of $S_{n}$ 
when restricted to $S_n$ if and only if   
$\underline{a} \geq_Y \underline{b}$. (Although this note is motivated by questions on branching laws from $\GL({n+1})$ to $\GL(n)$ in the p-adic context, it is not clear if Young order, and its relation to branching law from $S_{n+1}$ to $S_n$ is really relevant to this note, or it is a red herring.)

Therefore, the condition $W \geq_Y V^-$ means,
\begin{eqnarray*}
m_d &\geq & n_d, \\ 
m_d + m_{d-1}&\geq & n_d + n_{d-1}, \\ 
m_d + m_{d-1} + m_{d-2} & \geq & n_d + n_{d-1} + n_{d-2}, \\ 
\cdots  & \geq  & \cdots, \\ 
m_d + m_{d-1} + m_{d-2}+ \cdots+ m_1 & \geq & n_d + n_{d-1} +  n_{d-2} + \cdots + n_1.
\end{eqnarray*}
These conditions can be reinterpreted in terms of $a_i$ defined earlier as $a_i = \sum _{j \geq i} (n_j-m_j)$ to
simply $a_i \leq 0$ for all $i \geq 1$.
 
The condition $V \geq_Y W^-$ means,
\begin{eqnarray*}
n_d & \geq &  0 \\
n_d + n_{d-1}& \geq & m_d, \\ 
n_d + n_{d-1} + n_{d-2} & \geq & m_d + m_{d-1}, \\ 
n_d + n_{d-1} + n_{d-2} + n_{d-3} & \geq & m_d + m_{d-1} + m_{d-2}, \\ 
\cdots  & \geq  & \cdots, \\ 
n_d + n_{d-1} + n_{d-2}+ \cdots+ n_1 & \geq & m_d + m_{d-1} +  m_{d-2} + \cdots + m_2,
\end{eqnarray*}
which can be rewritten as $a_i+m_i \geq 0$ for all $i \geq 1$.

Thus, it follows that for representations $V,W$ with $W \geq_Y V^-$ and $V \geq_Y W^-$, 
we have,
$$a_i(a_i+m_i) \leq 0.$$
Thus we have proved the following proposition.

\begin{proposition}
Let $V =\sum_{i=0}^{i=d} n_i \sym^{2i}(\C^2)$, and 
$W  = \sum_{i=1}^{i=d} m_i \sym^{2i-1}(\C^2)$ 
 be two finite dimensional representations of $\SU_2$.  Then for
$W \geq_Y V^-$ and $V \geq_Y W^-$, and for $n$ large enough,
 we have the estimate,
$$\int _{\SU_2}[\chi_1 \St_{n} - \chi_2 \St_{n-1}]^2  \leq  (m_1^2 + \cdots + m_d^2) + (a_1+n_0)^2,   
 $$
and therefore,
$$\dim (V \otimes V[0] \oplus W \otimes W[0])- 2 \dim (V \otimes W[1])  
\leq  (m_1^2 + \cdots + m_d^2) + (a_1+n_0)^2 .$$

\end{proposition}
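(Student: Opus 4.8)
The plan is to read off both sides of the claimed inequality from a single quantity, namely the $L^2$-norm over $\SU_2$ of the class function $\chi_1\St_n - \chi_2\St_{n+1}$. The right-hand side will come from the explicit expansion $(\star)$ of this function in the orthonormal basis $\{\St_j\}$, while the left-hand side will come from re-interpreting the same integral as a difference of weight-space dimensions, exactly as in the proof of Lemma 1. So the first step is to expand the square by Schur orthogonality: since the coefficients in $(\star)$ are $a_i$ on $\St_{n+2i}$, $a_1+n_0$ on $\St_n$, and $a_i+m_i$ on $\St_{n-2i}$, orthonormality of the $\St_j$ gives
$$\int_{\SU_2}[\chi_1\St_n - \chi_2\St_{n+1}]^2 = (a_1+n_0)^2 + \sum_{i\geq 1}\bigl[a_i^2 + (a_i+m_i)^2\bigr],$$
and the identity $a_i^2+(a_i+m_i)^2 = 2a_i(a_i+m_i)+m_i^2$ rewrites this as $(a_1+n_0)^2 + \sum_{i\geq 1} m_i^2 + 2\sum_{i\geq 1}a_i(a_i+m_i)$.

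The second step is to kill the cross-sum using the two Young-order hypotheses in the translated form already established: $W\geq_Y V^-$ is equivalent to $a_i\leq 0$ for all $i\geq 1$, while $V\geq_Y W^-$ is equivalent to $a_i+m_i\geq 0$ for all $i\geq 1$. Hence each product $a_i(a_i+m_i)$ is a nonpositive factor times a nonnegative factor, so $a_i(a_i+m_i)\leq 0$ and $\sum_{i\geq 1}a_i(a_i+m_i)\leq 0$. Dropping this nonpositive term yields
$$\int_{\SU_2}[\chi_1\St_n - \chi_2\St_{n+1}]^2 \leq (a_1+n_0)^2 + (m_1^2+\cdots+m_d^2),$$
which is precisely the asserted upper bound on the integral.

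The final step is to identify the integral with the dimension quantity, and this is where I expect the only genuine care to be needed. For $n$ large enough, Clebsch-Gordan expresses $\St_n^2$ (resp. $\St_{n+1}^2$) as the character of every trivial-central-character irreducible of highest weight $\leq 2n$ (resp. $\leq 2n+2$) with multiplicity one, and $\St_n\St_{n+1}$ as the character of every non-trivial-central-character irreducible of highest weight $\leq 2n+1$ with multiplicity one; taking $n$ past the largest highest weight occurring in $V\otimes V$, $W\otimes W$, and $V\otimes W$ makes these truncations harmless. Combining this with the facts recalled for Lemma 1, that a trivial-central-character irreducible meets the zero weight space of $\ST^1$ exactly once while a non-trivial one meets the weight-one space exactly once, Schur orthogonality turns $\int\chi_1^2\St_n^2$ into $\dim(V\otimes V)[0]$, turns $\int\chi_2^2\St_{n+1}^2$ into $\dim(W\otimes W)[0]$, and turns $\int\chi_1\chi_2\St_n\St_{n+1}$ into $\dim(V\otimes W)[1]$; both the $\St_{n+1}$ expansion used here and the $\St_{n-1}$ expansion appearing in the statement produce the same dimension expression for large $n$, so the two are interchangeable. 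This gives $\int[\chi_1\St_n-\chi_2\St_{n+1}]^2 = \dim(V\otimes V[0]\oplus W\otimes W[0]) - 2\dim(V\otimes W[1])$, and substituting into the displayed bound completes the proof. The substantive content, the favorable sign regime carved out by the Young-order conditions, has already been isolated in the text, so the main obstacle is merely to confirm that the two order conditions translate correctly into the signs of $a_i$ and $a_i+m_i$, and to pin down honestly the ``$n$ large enough'' threshold in the dimension identification.
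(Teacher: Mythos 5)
Your argument is exactly the paper's own: expand $\chi_1\St_n-\chi_2\St_{n+1}$ in the orthonormal basis via $(\star)$, rewrite the $L^2$-norm as $(a_1+n_0)^2+\sum m_i^2+2\sum a_i(a_i+m_i)$, and use the translations $W\geq_Y V^-\Leftrightarrow a_i\leq 0$ and $V\geq_Y W^-\Leftrightarrow a_i+m_i\geq 0$ to drop the nonpositive cross-sum, with the dimension identification handled as in Lemma 1. The proposal is correct and follows the same route as the paper.
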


\section*{\bf 4.}

We end the note by discussing  an example.

\vspace{4mm}

\noindent{\bf Example:} Let 
\begin{eqnarray*}
V &=& \sum_{i=0}^d (n_i +m_i) \sym^{i}(\C^2), \\
W & =& \sum_{i=0}^d n_i\sym^{i-1}(\C^2) + \sum_{i=0}^d m_i \sym^{i+1}(\C^2),
\end{eqnarray*}
with character of $V$ to be $\chi_1$ and character of $W$ to be $\chi_2$. 
It can be seen that for these two representations of $\SU_2$, the conditions in 
Proposition 1, i.e.,  
$W \geq V^-$ and $V \geq W^-$, are satisfied.

Since,
\begin{eqnarray*}
\St_i \St_n - \St_{i+1}\St_{n+1}  & = & [\St_{n+i} + \cdots + \St_{n-i}] -  [\St_{n+i+2} + \cdots + \St_{n-i}]   \\
& = & -\St_{n+i+2},
\end{eqnarray*}
and similarly,
\begin{eqnarray*}
\St_i \St_n - \St_{i-1}\St_{n+1}  & = & [\St_{n+i} + \cdots + \St_{n-i}] -  [\St_{n+i} + \cdots + \St_{n-i+2}]   \\
& = & \St_{n-i}.
\end{eqnarray*}
it follows that,
\begin{eqnarray*}
\chi_1 \St_n - \chi_2 \St_{n+1} 
& = & \sum n_i(\St_i \St_n -\St_{i-1}\St_{n+1}) + \sum m_i(\St_i \St_n -\St_{i+1}\St_{n+1}) \\
& =& \sum n_i\St_{n-i} - \sum m_i\St_{n+i+2}.
\end{eqnarray*}

Therefore for $n$ large enough,
\begin{eqnarray*}
\int_{\SU(2)} [\chi_1 \St_n - \chi_2 \St_{n+1}]^2  = \sum(n_i^2 +m_i^2),
\end{eqnarray*}
and therefore,
$$\dim (V \otimes V[0] 
\oplus W \otimes W[0])- 2 \dim (V \otimes W[1]) 
 = \sum(n_i^2 +m_i^2).$$

\vspace{4mm}

\noindent{\bf Remark :} Putting $n_i=m_i$ in this example, we find that 
in a certain sense, Lemma 2 is the best possible.

\end{document}